\newcommand{\R}{\mathbb{R}}
\newcommand{\N}{\mathbb{N}}
\newcommand{\F}{\mathcal{F}}
\renewcommand{\P}{\mathbb{P}}
\newcommand{\E}{\mathbb{E}}
\newcommand{\1}{\mathbbm{1}}
\newtheorem{Theorem}{Theorem}[section]
\newtheorem{Proposition}[Theorem]{Proposition}
\newtheorem{Corollary}[Theorem]{Corollary}
\newtheorem{Remark}[Theorem]{Remark}
\newtheorem{Definition}[Theorem]{Definition}
\newtheorem{Example}[Theorem]{Example}
\numberwithin{equation}{section}
\newenvironment{sciabstract}{\begin{quote}}{\end{quote}}
\newcounter{lastnote}
\title{Boundary behavior of multi-type continuous-state branching processes with immigration}
\newcommand{\pdftitle}{Boundary behavior of multi-type continuous-state branching processes with immigration}
\newcommand{\pdfauthor}{Martin Friesen}
\author{
Martin Friesen\footnote{Fakult\"at f\"ur Mathematik und Naturwissenschaften, Bergische Universit\"at Wuppertal, Gaußstraße 20, 42119 Wuppertal, Germany, friesen@math.uni-wuppertal.de}\\
Peng Jin\footnote{Department of Mathematics, Shantou University, Shantou, Guangdong 515063, China, pjin@stu.edu.cn \newline
Peng Jin is supported by the STU Scientific Research Foundation for Talents (No. NTF18023)}\\
Barbara R\"udiger\footnote{Fakult\"at f\"ur Mathematik und Naturwissenschaften, Bergische Universit\"at Wuppertal, Gaußstraße 20, 42119 Wuppertal, Germany, ruediger@uni-wuppertal.de}
}
\def\HyPsd@CatcodeWarning#1{}
\begin{document}

\maketitle

\begin{sciabstract}\textbf{Abstract:}
In this article we provide
a sufficient condition for a continuous-state branching
process with immigration (CBI process) to not hit its boundary, 
i.e. for non-extinction.
Our result applies to arbitrary dimension $d\geq1$ and is formulated
in terms of an integrability condition for its immigration and branching
mechanisms $F$ and $R$. The proof is based on a suitable comparison
with one-dimensional CBI processes and an existing result for one-dimensional
CBI processes. The same technique is also used to provide a sufficient
condition for transience of multi-type CBI processes.
\end{sciabstract}

\noindent \textbf{AMS Subject Classification:} 60G17; 60J25; 60J80\\
\textbf{Keywords:} multi-type continuous-state branching process with immigration; extinction; transience; comparison principle

\section{Introduction}
Continuous-state branching processes with immigration
(shorted as CBI processes) form a class of time-homogeneous Markov
processes with state space
\[
\R_{+}^{d}=\{x\in\R^{d}\ |\ x_{1},\dots,x_{d}\geq0\},\ \ d\in\N,
\]
whose Laplace transform is an exponentially affine function of the
initial state variable, i.e., CBI processes are affine
processes in the sense of \cite[Definition 2.6]{DFS03}.
They have been first studied in dimension $d=1$ in \cite{F51}, \cite{L67b} and \cite{SW73}, where it was shown
that they arise as scaling limits of Galton-Watson branching processes.
For an introduction to such type of processes
in arbitrary dimension we refer to \cite{K06}, \cite{P16} and
\cite{L11}, where superprocesses were also discussed. Although these
processes are initially used to describe populations
of multiple spices, they have also various applications in mathematical
finance, see, e.g., \cite{A15} and \cite{DFS03} and
the references therein. At this point we would like to mention only
some recent results on the long-time behavior of CBI processes.
Namely, convergence of supercritical CBI processes was recently studied in 
\cite{BPP18b} and \cite{BPP18} while convergence in the total variation distance for affine processes on convex cones (including subcritical CBI processes)
was recently studied in \cite{MSV18}.
Results applicable to the class of affine processes on the canonical state space $\R_+^d \times \R^n$ were obtained in \cite{FJR18c}, \cite{GZ18} and \cite{JKR18}.

Let us describe CBI processes in more detail. 
\begin{Definition}\label{ADMISSIBLE}
The tuple $(c,\beta,B,\nu,\mu)$ is called admissible if
\begin{enumerate}
\item[(i)] $c=(c_{1},\dots,c_{d})\in\R_{+}^{d}$.
\item[(ii)] $\beta=(\beta_{1},\dots,\beta_{d})\in\R_{+}^{d}$.
\item[(iii)] $B=(b_{kj})_{k,j\in\{1,\dots,d\}}$ is such that,
for $k,j\in\{1,\dots,d\}$ with $k \neq j$, one has
\[
b_{kj}-\int\limits _{\R_{+}^{d}}z_{k}\mu_{j}(dz) \geq 0.
\]
\item[(iv)] $\nu$ is a Borel measure on $\R_{+}^{d}$ satisfying $\int_{\R_{+}^{d}}(1\wedge|z|)\nu(dz)<\infty$
and $\nu(\{0\})=0$.
\item[(vi)] $\mu=(\mu_{1},\dots,\mu_{d})$, where, for each $j\in\{1,\dots,d\}$,
$\mu_{j}$ is a Borel measure on $\R_{+}^{d}$ satisfying
\begin{equation}
\int\limits _{\R_{+}^{d}}\left(|z|\wedge|z|^{2}+\sum\limits _{k\in\{1,\dots,d\}\backslash\{j\}}z_{k}\right)\mu_{j}(dz)<\infty,\ \ \mu_{j}(\{0\})=0.\label{EQ:00}
\end{equation}
\end{enumerate}
\end{Definition} 
Note that this definition is a special
case of \cite[Definition 2.6]{DFS03}. Here we consider the state
space $\R_{+}^{d}$, exclude killing and require the measures $\mu_{1},\dots,\mu_{d}$
to satisfy the additional integrability condition $\sum_{j=1}^{d}\int_{|z|>1}|z|\mu_{j}(dz)<\infty$,
see also \cite[Remark 2.3]{BLP15} for additional comments. These
conditions together imply that the multi-type CBI process introduced
below is conservative.

Let $(c,\beta,B,\nu,\mu)$ be admissible parameters. 
It was shown in \cite[Theorem 2.7]{DFS03} (see also \cite[Remark 2.5]{BLP15}),
that there exists a unique conservative Feller transition semigroup $(P_t)_{t \geq 0}$ acting on the Banach space of continuous functions vanishing at infinity with state space $\R_{+}^d$ such that its generator has core $C_c^{\infty}(\R_+^d)$ and is, for $f \in C_c^2(\R_+^d)$, given by 
\begin{align}\label{GENERATOR}
(Lf)(x) & =\sum\limits _{j=1}^{d}c_{j}x_{j}\frac{\partial^{2}f(x)}{\partial x_{j}^{2}}+\langle\beta+Bx,(\nabla f)(x)\rangle+\int\limits _{\R_{+}^{d}}(f(x+z)-f(x))\nu(dz)\\
 & \ \ \ +\sum\limits _{j=1}^{d}x_{j}\int\limits _{\R_{+}^{d}}\left(f(x+z)-f(x)-\langle z,(\nabla f)(x)\rangle\right)\mu_{j}(dz),
\end{align}
where $\langle\cdot,\cdot\rangle$ denotes the Euclidean scalar product
on $\R^{d}$. The corresponding Markov process with generator $L$ is called multi-type CBI process. Moreover, the Laplace transform of its transition kernel 
$P_t(x,dy)$ has representation
\[
\int\limits _{\R_{+}^{d}}e^{-\langle\xi,y\rangle}P_{t}(x,dy)=\exp\left(-\langle x,v(t,\xi)\rangle-\int\limits _{0}^{t}F(v(s,\xi))ds\right),\ \ x,\xi\in\R_{+}^{d},\ \ t\geq0,
\]
where, for any $\xi\in\R_{+}^{d}$, the continuously differentiable
function $t\longmapsto v(t,\xi)\in\R_{+}^{d}$ is the unique locally
bounded solution to the system of differential equations
\begin{equation}
\frac{\partial v(t,\xi)}{\partial t}=-R(v(t,\xi)),\ \ v(0,\xi)=\xi.\label{RICCATI}
\end{equation}
Here $F$ and $R$ are of L\'evy-Khinchine form
\begin{align*}
F(\xi) &= \langle\beta,\xi\rangle+\int\limits _{\R_{+}^{d}}\left(1-e^{-\langle\xi,z\rangle}\right)\nu(dz),\\
R_{j}(\xi) &= c_{j}\xi_{j}^{2}-\langle Be_{j},\xi\rangle+\int\limits _{\R_{+}^{d}}\left(e^{-\langle\xi,z\rangle}-1+\langle\xi,z\rangle\right)\mu_{j}(dz),\qquad j\in\{1,\dots,d\},
\end{align*}
and $e_{1},\dots,e_{d}$ denote the canonical basis vectors in $\R^{d}$.
Most of the results obtained for multi-type CBI processes are based
on a detailed study of the generalized Riccati equation \eqref{RICCATI},
where $F$ and $R$ are called the immigration and branching mechanisms, respectively.

The possibility to describe a multi-type CBI process as a strong solution
to a stochastic differential equation was studied in \cite{BLP15}.
Below we provide such a pathwise description.
Let $(\Omega,\F,(\F_{t})_{t\geq0},\P)$ be a filtered
probability space satisfying the usual conditions. Consider the following
objects defined on $(\Omega,\F,(\F_{t})_{t\geq0},\P)$:
\begin{enumerate}
\item[(A1)] A $d$-dimensional $(\F_{t})_{t\geq0}$-Brownian motion $W=(W(t))_{t\geq0}$.
\item[(A2)] $(\F_{t})_{t\geq0}$-Poisson random measures $N_{1},\dots,N_{d}$
on $\R_{+}\times\R_{+}^{d}\times\R_{+}$ with compensators
\[
\widehat{N}_{j}(ds,dz,dr)=ds\mu_{j}(dz)dr,\qquad j\in\{1,\dots,d\}.
\]
\item[(A3)] A $(\F_{t})_{t\geq0}$-Poisson random measure $N_{\nu}$ on $\R_{+}\times\R_{+}^{d}$
with compensator $\widehat{N}_{\nu}(ds,dz)=ds\nu(dz)$.
\end{enumerate}
The objects $W,N_{\nu},N_{1},\dots,N_{d}$ are supposed to be mutually
independent. Denote by $\widetilde{N}_{j}=N_{j}-\widehat{N}_{j}$,
$j\in\{1,\dots,d\}$, and $\widetilde{N}_{\nu}=N_{\nu}-\widehat{N}_{\nu}$
the corresponding compensated Poisson random measures. Then it was
shown in \cite[Theorem 4.6]{BLP15} that, for each $x\in\R_{+}^{d}$
there exists a unique $\R_{+}^{d}$-valued strong solution to
\begin{align}
X(t) & =x+\int\limits _{0}^{t}\left(\beta+BX(s)\right)ds+\sum\limits _{k=1}^{d}\sqrt{2c_{k}}e_{k}\int\limits _{0}^{t}\sqrt{X_{k}(s)}dW_{k}(s)+\int\limits _{0}^{t}\int\limits _{\R_{+}^{d}}zN_{\nu}(ds,dz)\label{SDE:CBI}\\
 & \nonumber \ \ \ +\sum\limits _{j=1}^{d}\int\limits _{0}^{t}\int\limits _{|z|\leq1}\int\limits _{\R_{+}}z\1_{\{r\leq X_{j}(s-)\}}\widetilde{N}_{j}(ds,dz,dr)\\
 & \ \ \ +\sum\limits _{j=1}^{d}\int\limits _{0}^{t}\int\limits _{|z|>1}\int\limits _{\R_{+}}z\1_{\{r\leq X_{j}(s-)\}}N_{j}(ds,dz,dr)
 -\sum\limits _{j=1}^{d}\int\limits _{0}^{t}\left(\int\limits _{|z|>1}z\mu_{j}(dz)\right)X_{j}(s)ds.\nonumber
\end{align} 
An application of the It\^{o}-formula shows that
$X$ solves the martingale problem with generator \eqref{GENERATOR},
i.e., $X$ is a multi-type CBI process. Conversely, the law of a multi-type
CBI process can be obtained from \eqref{SDE:CBI}, see \cite{BLP15}
for additional details.

Smoothness of transition probabilities for one-dimensional CBI processes
was recently studied in \cite{CLP18}, where very precise results
have been obtained. In \cite{FJR18a} (see also \cite{FMS13} for
related results) we have studied existence of transition densities
for multi-type CBI processes. It was shown that, under appropriate
conditions, such a density exists on the interior of its state space,
i.e. on $\Gamma=\{x\in\R_{+}^{d}\ |\ x_{1},\dots,x_{d}>0\}$.
In this work we provide conditions under which the corresponding multi-type
CBI process is supported on $\Gamma$, i.e. $\P[ X(t) \in \Gamma, \quad t \geq 0] = 1$. Such property simply states that the population described by $X$ does not get extinct. As a consequence, it has, under
the conditions of \cite{FJR18a} and those presented in this work,
a density on the whole state space $\R_{+}^{d}$.

The study of boundary behavior, recurrence and transience for CBI
processes has, in dimension $d=1$, a long history where we would
like to mention the works \cite{G74} and \cite{FFS85}. More recent
works, still in dimension $d=1$, include \cite{CPU13}, \cite{DFM14}, \cite{FU13},
 and \cite{FU14}. Based on these results we provide
sufficient conditions for non-extinction
and transience of multi-type CBI processes applicable in arbitrary
dimension $d\geq1$.

This work is organized as follows. In Section 2 we state and discuss
the main results of this work. These results are then
proved in Section 3, while some technical computations are given in
the appendix.

\section{Statement of the results}
Here and below we denote by $X$ a multi-type CBI process with admissible parameters $(c,\beta,B,\nu,\mu)$ obtained from \eqref{SDE:CBI}. 
We start with the simple case where one component
of the multi-type CBI process has bounded variation. 
\begin{Proposition}\label{PROP:00}
Suppose that there exists $k\in\{1,\dots,d\}$ such that
\begin{equation}
c_{k}=0\qquad\text{and}\qquad\int\limits _{|z|\leq1}z_{k}\mu_{k}(dz)<\infty.\label{EQ:07}
\end{equation}
Then $X_{k}$ has bounded variation and 
\begin{equation}
X_{k}(t)\geq\begin{cases}
e^{\theta_{k}t}x_{k}+\beta_{k}\frac{e^{\theta_{k}t}-1}{\theta_{k}}, & \text{ if }\theta_{k}\neq0\\
x_{k}+\beta_{k}t, & \text{ if }\theta_{k}=0
\end{cases},\qquad t\geq0,\label{EQ:09}
\end{equation}
where $\theta_{k}= b_{kk} - \int _{\R_+^d}z_{k}\mu_{k}(dz)\in\R$.
\end{Proposition}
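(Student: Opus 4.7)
The plan is to take the $k$-th coordinate of \eqref{SDE:CBI}, use the hypothesis \eqref{EQ:07} together with admissibility to rewrite $X_k$ in an uncompensated form $X_k(t) = x_k + \int_0^t (\beta_k + \theta_k X_k(s))\,ds + A(t)$ with $A$ non-decreasing and $A(0)=0$, and then compare with the linear ODE driven by the same drift. Since $c_k=0$ the Brownian line of \eqref{SDE:CBI} vanishes in that coordinate. The integrability check needed to uncompensate every small-jump integral in the $k$-th coordinate is as follows: for $j \neq k$, \eqref{EQ:00} directly gives $\int_{\R_+^d} z_k\,\mu_j(dz) < \infty$, and for $j = k$, the hypothesis \eqref{EQ:07} combined with $\int_{|z|>1} z_k\,\mu_k(dz) \leq \int_{|z|>1} |z|\,\mu_k(dz) < \infty$ (again from \eqref{EQ:00}) yields $\int_{\R_+^d} z_k\,\mu_k(dz) < \infty$. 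Hence every small-jump compensator $\int_0^t X_j(s)\int_{|z|\leq 1} z_k\,\mu_j(dz)\,ds$ is an absolutely continuous finite-variation process.

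After uncompensating, splitting $(BX(s))_k = b_{kk}X_k(s) + \sum_{j\neq k} b_{kj} X_j(s)$, and folding in the new compensator drifts, the $k$-th coordinate of \eqref{SDE:CBI} takes the form
\[
X_k(t) = x_k + \int_0^t \! \left( \beta_k + \theta_k X_k(s) + \sum_{j\neq k} \Big( b_{kj} - \int_{\R_+^d} z_k\,\mu_j(dz) \Big) X_j(s) \right) ds + J_k(t),
\]
where $J_k(t) := \int_0^t\int_{\R_+^d} z_k\, N_\nu(ds,dz) + \sum_{j=1}^d \int_0^t \int_{\R_+^d}\int_{\R_+} z_k \1_{\{r \leq X_j(s-)\}}\, N_j(ds,dz,dr) \geq 0$ is a non-decreasing pure-jump process. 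By admissibility condition (iii) one has $b_{kj} - \int_{\R_+^d} z_k\,\mu_j(dz) \geq 0$ for $j \neq k$, and $X_j(s) \geq 0$, so the cross-term drift is also non-decreasing. Consequently $X_k$ is the sum of an absolutely continuous drift and a non-decreasing process, which proves the bounded-variation claim.

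For the lower bound I would absorb the non-negative cross-term drift and $J_k$ into a single non-decreasing process $A(t)$ with $A(0)=0$, giving $X_k(t) = x_k + \int_0^t (\beta_k + \theta_k X_k(s))\,ds + A(t)$. If $y(t)$ denotes the solution of $\dot y = \beta_k + \theta_k y$ with $y(0) = x_k$, whose explicit expression is the right-hand side of \eqref{EQ:09}, then $u(t) := X_k(t) - y(t)$ satisfies the linear Volterra identity $u(t) = \int_0^t \theta_k u(s)\,ds + A(t)$, and variation of constants delivers $u(t) = \int_0^t e^{\theta_k(t-s)}\,dA(s) \geq 0$, which is exactly \eqref{EQ:09}. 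The only mildly delicate step in this argument is the integrability bookkeeping for the small-jump compensators in the first paragraph; the remaining steps are a routine rearrangement and a standard pathwise comparison.
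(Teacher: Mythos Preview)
Your argument is correct. The rewriting of the $k$-th coordinate into uncompensated form under \eqref{EQ:07} is exactly what the paper does (with the same admissibility checks), and your bounded-variation conclusion matches.

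Where you differ is in the comparison step. The paper does not argue pathwise via variation of constants; instead it simply observes that $y(t)$ solves $y(t)=x_k+\int_0^t(\beta_k+\theta_k y(s))\,ds$ and then invokes the machinery of Proposition~\ref{COMPARISON}: approximate $(\cdot)_+$ by the Yamada--Watanabe functions $\phi_m$, apply It\^o's formula to $\phi_m(y(t)-X_k(t))$, estimate the drift terms, and close with Gronwall. Your route is more elementary and more transparent in this particular setting: because $c_k=0$ there is no diffusion, and after uncompensating all jump integrals the remainder $A(t)$ is genuinely non-decreasing, so the linear Volterra identity $u(t)=\int_0^t\theta_k u(s)\,ds+A(t)$ is available and its resolvent $u(t)=\int_0^t e^{\theta_k(t-s)}\,dA(s)\ge 0$ gives the bound directly, pathwise, without any $\phi_m$-approximation or expectations. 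The paper's approach has the advantage of being uniform with the proof of Proposition~\ref{PROP:01} and Proposition~\ref{COMPARISON} (one technique for all three), while yours exploits the specific bounded-variation structure here to avoid that overhead.
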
 
The proof of this result is given in the appendix.
From this we easily obtain the following corollary. 
\begin{Corollary}
Let $k \in \{1,\dots, d\}$ and suppose that \eqref{EQ:07} holds.
If either $x_k > 0$ or $\beta_k > 0$,
then $\P[X_{k}(t) > 0, \ \ t \geq 0] = 1$.
\end{Corollary}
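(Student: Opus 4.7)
The plan is to read the conclusion directly off the pathwise lower bound \eqref{EQ:09} supplied by Proposition~\ref{PROP:00}. Since that bound is deterministic and holds for every $t\geq 0$ almost surely, it suffices to check that its right-hand side is strictly positive under the hypothesis $x_k>0$ or $\beta_k>0$. No path-space or SDE argument is needed beyond what is already contained in Proposition~\ref{PROP:00}.

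I would split into the two cases appearing in \eqref{EQ:09}. When $\theta_k=0$, the bound equals $x_k+\beta_k t$, which is manifestly positive for every $t\geq 0$ if $x_k>0$, and for every $t>0$ if $\beta_k>0$. When $\theta_k\neq 0$, set $\phi(t):=(e^{\theta_k t}-1)/\theta_k$; one checks $\phi(0)=0$ and $\phi'(t)=e^{\theta_k t}>0$, so $\phi(t)>0$ for all $t>0$, regardless of the sign of $\theta_k$. Consequently $e^{\theta_k t}x_k+\beta_k\phi(t)$ is a sum of two non-negative terms, and the hypothesis forces at least one of them to be strictly positive for every $t>0$ (with positivity at $t=0$ as well provided $x_k>0$).

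I do not anticipate any real obstacle: the entire content of the corollary is packaged in \eqref{EQ:09}, and only an elementary sign analysis of the two right-hand expressions is left. The mildly subtle point is the boundary value at $t=0$ in the case $x_k=0$, $\beta_k>0$, where the deterministic lower bound vanishes; this is consistent with $X_k(0)=x_k=0$, and the conclusion $X_k(t)>0$ then has to be read in the natural sense of $t>0$.
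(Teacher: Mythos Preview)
Your proposal is correct and matches the paper's approach exactly: the paper simply says the corollary is ``easily obtained'' from Proposition~\ref{PROP:00}, i.e., it is precisely the elementary sign analysis of the right-hand side of \eqref{EQ:09} that you carry out. Your observation about the $t=0$ boundary case when $x_k=0$, $\beta_k>0$ is also apt; the statement is naturally read as $t>0$ there.
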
 
The next proposition gives a multi-dimensional analogue
of this result. For $x,y\in\mathbb{R}^{d}$ we will
write $x\le y$ to mean that $x_{i}\le y_{i}$ for all $i=1,\ldots,d$.
\begin{Proposition}\label{PROP:01} 
Suppose that \eqref{EQ:07} holds
for all $k\in\{1,\dots,d\}$. Then $X$ has bounded variation and
it holds that
\begin{equation}
X(t)\geq e^{tG}x+\int\limits _{0}^{t}e^{sG}\beta ds,\label{EQ:10}
\end{equation}
where $G=(g_{kj})_{k,j\in\{1,\dots,d\}}$ is given by
\begin{align}\label{GDEF}
 g_{kj}=b_{kj} - \int \limits_{\R_+^d}z_k \mu_j(dz), \qquad k,j \in \{1,\dots, d\}.
\end{align}
\end{Proposition}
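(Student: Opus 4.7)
The plan is to mimic the proof of Proposition~\ref{PROP:00} but to keep every cross-coordinate drift contribution instead of dropping it as in the scalar case. First, I would check that \eqref{EQ:07} for every $k$, combined with admissibility (vi) (which already gives $\int_{\R_+^d} z_k \mu_j(dz) < \infty$ for $k \neq j$) and with $\int_{|z|>1}|z|\mu_j(dz)<\infty$, implies $\int_{\R_+^d} z_k \mu_j(dz) < \infty$ for all $j,k \in \{1,\dots,d\}$. This permits uncompensating every small-jump term in \eqref{SDE:CBI} and regrouping the drift contributions, leading to the representation
\begin{align*}
X(t) = x + \int_0^t \bigl(\beta + G\,X(s)\bigr)\, ds + A(t),
\end{align*}
where $G$ is the matrix \eqref{GDEF} and $A(t)\in\R_+^d$ is a non-decreasing, pure-jump process collecting the positive immigration jumps from $N_\nu$ and the uncompensated branching jumps from $N_1,\dots,N_d$. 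This already yields the bounded-variation assertion.

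The second ingredient is positivity of the matrix exponential $e^{tG}$. By admissibility (iii), $g_{kj}\geq 0$ for $k\neq j$, so $G$ is a Metzler matrix; choosing $\lambda\geq 0$ large enough that $G+\lambda I$ has only non-negative entries and writing $e^{tG}=e^{-\lambda t}\,e^{t(G+\lambda I)}$ exhibits $e^{tG}$ as a non-negative scalar times a matrix whose power series has only non-negative entries, hence $e^{tG}$ has only non-negative entries for every $t\geq 0$.

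Next, set $V(t):= e^{tG}x+\int_0^t e^{(t-s)G}\beta\, ds$; a change of variable $u=t-s$ shows $V(t)$ equals the right-hand side of \eqref{EQ:10}, and $V$ solves $V'(t)=GV(t)+\beta$ with $V(0)=x$. Subtracting the corresponding integral equation for $V$ from that for $X$, the difference $W(t):=X(t)-V(t)$ satisfies the linear Volterra equation
\begin{align*}
W(t) = A(t) + \int_0^t G\,W(s)\, ds,
\end{align*}
whose unique solution, verified by Fubini together with the identity $\int_s^t G\,e^{(u-s)G}\, du = e^{(t-s)G}-I$, is $W(t) = \int_0^t e^{(t-s)G}\, dA(s)$. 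Since $e^{(t-s)G}$ has non-negative entries and $dA$ is a non-negative $\R_+^d$-valued measure, $W(t)\geq 0$ component-wise, which is precisely \eqref{EQ:10}.

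The main obstacle is the first step: verifying the absolute integrability of all small-jump terms under the strengthened hypothesis, legitimately uncompensating them, and confirming that the residual process $A$ is coordinate-wise non-decreasing (so that $dA$ is a genuine non-negative vector measure). Once this pathwise rewriting is in place, the argument reduces to standard linear-Volterra theory combined with the quasi-monotonicity of $G$.
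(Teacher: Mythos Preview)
Your proposal is correct but follows a genuinely different route from the paper. The paper performs the same uncompensated rewriting of $X$ and then simply invokes the comparison machinery of Proposition~\ref{COMPARISON} (the Yamada--Watanabe-type argument with the smooth approximations $\phi_m$ of $z_+$, It\^{o}'s formula, stopping times and Gronwall) to compare $X$ coordinate-wise with the deterministic solution $y(t)=e^{tG}x+\int_0^t e^{sG}\beta\,ds$ of $y'=\beta+Gy$. Your argument is instead purely pathwise: once the representation $X(t)=x+\int_0^t(\beta+GX(s))\,ds+A(t)$ with $A$ non-decreasing is in hand, you solve the resulting linear Volterra equation explicitly by variation of constants and use only the Metzler property of $G$ to ensure $e^{tG}\geq 0$. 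Your approach is more elementary---no stochastic calculus beyond the rewriting step, no approximation sequence, no localization---and it yields the extra information $X(t)-V(t)=\int_0^t e^{(t-s)G}\,dA(s)$; the paper's approach, on the other hand, is a one-line appeal to already-established comparison machinery and is therefore shorter in context.
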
 The proof of this statement is given in the appendix.
In view of this estimate we restrict our further analysis to the
case where \eqref{EQ:07} does not hold, i.e., the process has unbounded
variation. In this case we define,
for $k \in \{1,\dots, d\}$, the projected immigration and branching mechanisms $F^{(k)},R^{(k)}:\R\longrightarrow\R$ by
\begin{align*}
F^{(k)}(\xi) & =\beta_{k}\xi+\int\limits _{\R_{+}^{d}}\left(1-e^{-\xi z_{k}}\right)\nu(dz),\\
R^{(k)}(\xi) & =-b_{kk}\xi+c_{k}\xi^{2}+\int\limits _{\R_{+}^{d}}\left(e^{-\xi z_{k}}-1+\xi z_{k}\right)\mu_{k}(dz).
\end{align*}
Then we obtain the following result. 
\begin{Theorem}\label{MAIN:THEOREM}
Suppose that there exists $k\in\{1,\dots,d\}$ and 
$\kappa>0$ such that $R^{(k)}(\xi)>0$ for $\xi \geq \kappa$.
If $c_{k}>0$ or $\int_{|z|\leq1}z_{k}\mu_{k}(dz)=\infty$,
and it holds that
\begin{equation}
\int\limits _{\kappa}^{\infty}\exp\left(\int\limits _{\kappa}^{\xi}\frac{F^{(k)}(u)}{R^{(k)}(u)}du\right)\frac{1}{R^{(k)}(\xi)}d\xi=\infty,\label{EQ:04}
\end{equation}
then $\P[ X_k(t) > 0, \ \ t \geq 0] = 1$, provided $x_k > 0$.
\end{Theorem}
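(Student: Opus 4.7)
The plan is to couple $X$ with a one-dimensional CBI process $Y$ on the same stochastic basis and establish the pathwise bound $X_k(t) \geq Y(t)$ for all $t \geq 0$; Theorem \ref{MAIN:THEOREM} then follows from the known non-extinction criterion for one-dimensional CBI processes. Concretely, let $Y$ be the strong solution of the one-dimensional analogue of \eqref{SDE:CBI}:
\begin{align*}
Y(t) & = x_k + \beta_k t + b_{kk} \int_0^t Y(s)\, ds + \sqrt{2 c_k} \int_0^t \sqrt{Y(s)}\, dW_k(s) + \int_0^t \!\!\int_{\R_+^d} z_k\, N_\nu(ds, dz) \\
& \ \ \ + \int_0^t \!\!\int_{|z|\leq 1}\!\! \int_{\R_+} z_k\, \1_{\{r \leq Y(s-)\}}\, \widetilde{N}_k(ds, dz, dr) + \int_0^t \!\!\int_{|z|>1}\!\! \int_{\R_+} z_k\, \1_{\{r \leq Y(s-)\}}\, N_k(ds, dz, dr) \\
& \ \ \ - \int_0^t \Bigl( \int_{|z|>1} z_k\, \mu_k(dz) \Bigr) Y(s)\, ds,
\end{align*}
driven by the same $W_k$, $N_\nu$, $N_k$ that appear in the SDE for $X$. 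Applying \cite{BLP15} in dimension one, $Y$ is a one-dimensional CBI process with branching and immigration mechanisms exactly $R^{(k)}$ and $F^{(k)}$.

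For the comparison one uses admissibility condition (iii). For every $j \neq k$ the integral $\int_{\R_+^d} z_k\, \mu_j(dz)$ is finite by (vi), so rewriting the small-jump compensated integral as a raw integral minus its compensator and combining this with the large-jump compensator subtraction, the total contribution of the index $j$ to the $k$-th coordinate of \eqref{SDE:CBI} equals
\[
\Bigl( b_{kj} - \int_{\R_+^d} z_k\, \mu_j(dz) \Bigr) \int_0^t X_j(s)\, ds + \int_0^t \!\!\int_{\R_+^d}\!\! \int_{\R_+} z_k\, \1_{\{r \leq X_j(s-)\}}\, N_j(ds, dz, dr),
\]
which is pathwise non-negative by (iii) and the positivity of $z_k$ and $X_j$; the immigration term $\int_0^t \int z_k\, N_\nu(ds,dz)$ is likewise non-negative. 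Consequently the SDE for $X_k$ agrees with the SDE for $Y$ up to pathwise non-negative additive perturbations, and a Yamada--Watanabe-type comparison for jump SDEs with square-root diffusion (in the spirit of \cite{BLP15}) delivers $X_k(t) \geq Y(t)$ almost surely for all $t \geq 0$. The hypothesis that either $c_k > 0$ or $\int_{|z|\leq 1} z_k\, \mu_k(dz) = \infty$ rules out the bounded-variation regime of Proposition \ref{PROP:00} and places $Y$ in the setting of the one-dimensional non-extinction criterion, which under \eqref{EQ:04} (see e.g.\ \cite{FU14}) yields $\P[Y(t) > 0,\ t \geq 0] = 1$ whenever $Y(0) = x_k > 0$; combined with $X_k \geq Y$ this finishes the proof.

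The principal technical obstacle is the rigorous verification of the pathwise comparison $X_k \geq Y$. Because the diffusion coefficient $\sqrt{2 c_k x}$ is only H\"older-$1/2$, a direct Gronwall estimate is unavailable and one must smooth $x \mapsto x_-$ via Yamada--Watanabe approximating functions. The indicator-weighted jump integrals then require the observation that on $\{X_k(s-) = Y(s-)\}$ each jump of $N_k$ either affects both processes identically or increases only $X_k$, so that $X_k - Y$ cannot be pushed below zero by a jump of the common driving Poisson measure. These ingredients are standard in the CBI literature, but must be assembled for the specific coupling at hand.
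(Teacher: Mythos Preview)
Your proposal is correct and mirrors the paper's own argument: the auxiliary process $Y$ you construct is precisely the paper's $Y_k$ from Proposition~\ref{LEMMA:00}, the pathwise bound $X_k\geq Y$ is the content of Proposition~\ref{COMPARISON} (proved there by exactly the Yamada--Watanabe smoothing you sketch in your final paragraph), and the conclusion then follows from the one-dimensional criterion of \cite{FU13}/\cite{DFM14}. One minor slip in your write-up: since the immigration integral $\int_0^t\!\int z_k\,N_\nu(ds,dz)$ already appears in your definition of $Y$, it is not an additional ``non-negative perturbation'' of $X_k$ relative to $Y$---only the $j\neq k$ contributions play that role.
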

From this we directly deduce the following corollary.
\begin{Corollary}
If for each $k\in\{1,\dots,d\}$ the conditions of Theorem \ref{MAIN:THEOREM}
are satisfied, then $\P[ X(t) \in \Gamma, \quad t \geq 0] = 1$,
provided $x \in \Gamma = \{ x \in \R_+^d \ | \ x_1,\dots, x_d > 0 \}.$
\end{Corollary}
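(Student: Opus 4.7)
The proof is essentially a routine finite-intersection argument on top of Theorem \ref{MAIN:THEOREM}, so no new analytic ingredients are required.

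First I would unpack the set $\Gamma = \{x \in \R_+^d \ | \ x_1,\dots,x_d > 0\}$ coordinatewise: the event $\{X(t) \in \Gamma \text{ for all } t \geq 0\}$ is exactly the intersection
\[
\bigcap_{k=1}^{d} \bigl\{ X_k(t) > 0 \text{ for all } t \geq 0 \bigr\}.
\]
Since $x \in \Gamma$, every coordinate satisfies $x_k > 0$, so the hypotheses of Theorem \ref{MAIN:THEOREM} (namely that $R^{(k)}$ is eventually positive, the unbounded-variation condition $c_k > 0$ or $\int_{|z|\leq 1} z_k \mu_k(dz) = \infty$, the integral condition \eqref{EQ:04}, and $x_k > 0$) are satisfied for each individual $k \in \{1,\dots,d\}$.

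Applying Theorem \ref{MAIN:THEOREM} coordinate by coordinate yields
\[
\P\bigl[ X_k(t) > 0, \ \ t \geq 0 \bigr] = 1, \qquad k \in \{1,\dots,d\}.
\]
Since the intersection of finitely many almost sure events is almost sure, we conclude
\[
\P[X(t) \in \Gamma, \ \ t \geq 0] \;=\; \P\!\left[\bigcap_{k=1}^{d}\{X_k(t) > 0, \ t \geq 0\}\right] \;=\; 1,
\]
which is the claimed statement.

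There is no real obstacle here: the only thing to check is that the hypothesis ``conditions of Theorem \ref{MAIN:THEOREM} hold for each $k$'' is strong enough that one may apply the theorem separately to each coordinate without any joint compatibility issue, and this is immediate since each $F^{(k)}$ and $R^{(k)}$ is defined in terms of the fixed admissible parameters $(c,\beta,B,\nu,\mu)$ that drive the same SDE \eqref{SDE:CBI}.
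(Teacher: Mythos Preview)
Your argument is correct and is exactly the immediate deduction the paper has in mind: the corollary is stated without proof, with only the remark ``From this we directly deduce the following corollary,'' and your finite-intersection argument is precisely the routine verification.
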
 
We close
this subsection with a sufficient condition for \eqref{EQ:04}.
\begin{Remark}\label{CORR:01} 
Suppose that for some $k\in\{1,\dots,d\}$ the following conditions are satisfied:
\begin{enumerate}
\item[(i)] There exists $M_0>0$ such that $R^{(k)}(\xi)>0$ for $\xi\geq M_0$.
\item[(ii)] There exists $\gamma_{k}\in(0,1]$ and $M_1,C_{1}>0$ such that
 $F^{(k)}(\xi)\geq C_{1}\xi^{\gamma_{k}}$ for $\xi \geq M_1$.
\item[(iii)] There exist $\alpha_{k}\in(1,2]$ and $M_2,C_{2}>0$ such that
 $R^{(k)}(\xi)\leq C_{2}\xi^{\alpha_{k}}$ for $\xi \geq M_2$.
\end{enumerate}
Then \eqref{EQ:04} is satisfied, provided one of the following conditions
holds:
\begin{enumerate}
\item[(a)] $\alpha_{k}\in(1,1+\gamma_{k})$.
\item[(b)] $\alpha_{k}=1+\gamma_{k}$ and $\gamma_{k}\leq\frac{C_{1}}{C_{2}}$.
\end{enumerate}
\end{Remark} 
The proof of this remark is given in the appendix. Note
that, if $\beta_{k}>0$, then $F^{(k)}(\xi)\geq\beta_{k}\xi$ and
hence $\gamma_{k}=1$. However, this corollary also applies in the
particular case where $\beta_{1}=\dots=\beta_{d}=0$.

Finally we close our considerations with one sufficient condition for transience.
\begin{Theorem}\label{MAIN:THEOREM1} 
Let $k \in \{1,\dots, k\}$ and suppose that 
$R^{(k)}(\xi)>0$ holds for all $\xi>0$. 
Then $\P[\lim_{t\to\infty}X_{k}(t) = \infty]=1$, provided one of the following conditions is satisfied:
\begin{enumerate}
\item[(a)] $b_{kk}>0$.
\item[(b)] $b_{kk}\leq0$ and 
\begin{equation}
\int\limits _{0}^{1}\exp\left(-\int\limits _{\xi}^{1}\frac{F^{(k)}(u)}{R^{(k)}(u)}du\right)\frac{d\xi}{R^{(k)}(\xi)}<\infty.\label{EQ:16}
\end{equation}
\end{enumerate}
\end{Theorem}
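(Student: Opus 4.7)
The plan is to follow the same comparison strategy that underlies Theorem \ref{MAIN:THEOREM}. On the underlying stochastic basis carrying $W$, $N_\nu$ and $N_1,\dots,N_d$, I would construct a one-dimensional CBI process $Y$ with starting value $x_k$, branching mechanism $R^{(k)}$ and immigration mechanism $F^{(k)}$, by solving the projected one-dimensional SDE driven by $W_k$, by $N_k$ (keeping only the $k$-th coordinate of $z$) and by the $k$-th projection of $N_\nu$. Existence and pathwise uniqueness of this strong solution on $\R_+$ are supplied by \cite{BLP15}.

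The key step is the pathwise comparison $X_k(t) \geq Y(t)$ for all $t \geq 0$ almost surely. Subtracting the one-dimensional SDE for $Y$ from the SDE \eqref{SDE:CBI} restricted to its $k$-th coordinate, the drift difference equals $b_{kk}(X_k - Y) + \sum_{j \neq k}\bigl(b_{kj} - \int_{\R_+^d} z_k \mu_j(dz)\bigr) X_j(s)$, whose second summand is non-negative by admissibility (iii). The residual jumps indexed by $N_j$ with $j \neq k$ integrate the non-negative kernel $z_k \1_{\{r \leq X_j(s-)\}}$ and therefore also contribute non-negatively. For the shared drivers $W_k$ and $N_k$, the difference $X_k - Y$ exhibits the familiar Yamada--Watanabe structure: a $1/2$-Hölder diffusion coefficient $\sqrt{2c_k}(\sqrt{X_k} - \sqrt{Y})$, and the monotone indicator coupling $\1_{\{r \leq X_k(s-)\}} - \1_{\{r \leq Y(s-)\}}$ of the Poisson random measure $N_k$, which carries the same sign as $X_k - Y$. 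A Yamada--Watanabe-type comparison theorem for jump SDEs (cf.\ \cite{BLP15}) then yields $X_k(t) \geq Y(t)$ almost surely.

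With the comparison in place, it remains to prove $\P[\lim_{t \to \infty} Y(t) = \infty] = 1$ for the one-dimensional CBI process $Y$. The hypothesis $R^{(k)}(\xi) > 0$ for all $\xi > 0$ places $Y$ within the one-dimensional transience dichotomy. In case (a), $-(R^{(k)})'(0+) = b_{kk} > 0$ makes $Y$ supercritical, and combined with strict positivity of $R^{(k)}$ on $(0,\infty)$ the Laplace-transform analysis of the generalized Riccati equation yields $Y(t) \to \infty$ almost surely. In case (b), the integrability condition \eqref{EQ:16} is precisely the scale-function criterion from the one-dimensional CBI literature (cf.\ \cite{DFM14}, \cite{FU14}), giving the same almost-sure conclusion. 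The main obstacle I anticipate is the pathwise comparison in the unbounded-variation regime, where $\mu_k$ may have infinite small-jump intensity: the compensated small-jump terms in $X_k - Y$ require a careful localization so that the Yamada--Watanabe estimate still controls $(X_k - Y)^-$ after the cancellations between the $\widetilde{N}_k$-integrals driving $X_k$ and $Y$. Once this technicality is settled, the rest of the proof is a routine reduction to the one-dimensional transience result.
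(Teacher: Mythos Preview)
Your proposal is correct and follows essentially the same route as the paper: construct the one-dimensional projected CBI process $Y_k$ via Proposition~\ref{LEMMA:00}, establish the pathwise comparison $X_k\ge Y_k$ by a Yamada--Watanabe argument (Proposition~\ref{COMPARISON}, which handles exactly the localization issue you flag via stopping times $\tau_l$ and the sequence $\phi_m$), and then invoke the one-dimensional transience criterion of \cite{DFM14}. The only cosmetic difference is bookkeeping of the drift constant ($\widetilde b_{kk}$ versus $b_{kk}$, depending on whether large jumps are compensated), which does not affect the argument.
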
 
From this we easily conclude that, if the assumptions
of Theorem \ref{MAIN:THEOREM1} hold for each $k\in\{1,\dots,d\}$,
then $X$ is transient.

Let us close this section with one particlar example.
The multi-type CBI process $X$ with admissible
parameters $(c=0,\beta,B,\nu,\mu)$, where $\mu=(\mu_{1},\dots,\mu_{d})$
are, for $\alpha_{1},\dots,\alpha_{d}\in(1,2)$, given by
\begin{equation}
\mu_{j}(dz)=\1_{\R_{+}}(z_{j})\frac{dz_{j}}{z_{j}^{1+\alpha_{j}}}\otimes\prod\limits _{k\neq j}\delta_{0}(dz_{k}),\label{EQ:06}
\end{equation}
is called $d$-dimensional anisotropic $(\alpha_{1},\dots,\alpha_{d})$-root
process. 
\begin{Theorem}\label{THEOREM:02}
Let $X$ be the anisotropic $(\alpha_{1},\dots,\alpha_{d})$-root
process starting from $x \in \R_+^d$. Fix $k\in\{1,\dots,d\}$.
\begin{enumerate}
 \item[(a)] Suppose that there exist $C,M>0$ and $\gamma_{k}\in(0,1]$ such
that
\begin{equation}
\beta_{k}\xi+\int\limits _{\R_{+}^{d}}\left(1-e^{-\xi z_{k}}\right)\nu(dz)\geq C\xi^{\gamma_{k}},\qquad\xi\geq M.\label{EQ:13}
\end{equation}
If $x_{k}>0$ and $\alpha_{k}\in(1,1+\gamma_{k})$, 
then $\P[ X_k(t) > 0, \quad t \geq 0] = 1$.
 \item[(b)] If $b_{kk} > 0$, then $\P[\lim_{t\to\infty}X_{k}(t)=\infty]=1$.
\end{enumerate}
\end{Theorem}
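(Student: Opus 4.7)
The plan is to reduce each claim to an appropriate general result from the preceding theorems after an explicit computation of the projected branching and immigration mechanisms for the anisotropic root process.

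Since $c_k=0$ and $\mu_k$ is supported on the $k$-th coordinate axis with density $z_k^{-1-\alpha_k}$, the substitution $v=\xi z_k$ yields
\[
R^{(k)}(\xi)=-b_{kk}\xi+\xi^{\alpha_k}\int\limits_0^\infty\left(e^{-v}-1+v\right)v^{-1-\alpha_k}\,dv=-b_{kk}\xi+C_{\alpha_k}\xi^{\alpha_k},
\]
with $C_{\alpha_k}>0$ finite because $\alpha_k\in(1,2)$; simultaneously $F^{(k)}(\xi)=\beta_k\xi+\int_{\R_+^d}(1-e^{-\xi z_k})\nu(dz)$ is precisely the left-hand side of hypothesis \eqref{EQ:13}.

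For part (a), I would then verify the three technical bounds of Remark \ref{CORR:01}: positivity of $R^{(k)}$ for $\xi$ large follows from $\alpha_k>1$, the upper bound $R^{(k)}(\xi)\leq C_2\xi^{\alpha_k}$ is immediate from the explicit formula, and the lower bound $F^{(k)}(\xi)\geq C\xi^{\gamma_k}$ is precisely \eqref{EQ:13}. Since $\alpha_k\in(1,1+\gamma_k)$, alternative (a) of Remark \ref{CORR:01} furnishes the integrability \eqref{EQ:04}. The unbounded-variation hypothesis of Theorem \ref{MAIN:THEOREM} also holds because $c_k=0$ together with $\int_{|z|\leq 1}z_k\mu_k(dz)=\int_0^1 z_k^{-\alpha_k}\,dz_k=\infty$ (using $\alpha_k>1$). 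Invoking Theorem \ref{MAIN:THEOREM} then concludes part (a).

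For part (b), the plan is to apply Theorem \ref{MAIN:THEOREM1}(a) directly: the assumption $b_{kk}>0$ is exactly that case. The main subtlety is that the literal statement of Theorem \ref{MAIN:THEOREM1} carries the blanket hypothesis "$R^{(k)}(\xi)>0$ for all $\xi>0$", which is incompatible with $b_{kk}>0$ since $(R^{(k)})'(0+)=-b_{kk}<0$ forces $R^{(k)}$ to be strictly negative in a right neighborhood of zero; I read this hypothesis as pertaining only to alternative (b) of the theorem, so that under alternative (a) the sign condition $b_{kk}>0$ suffices. The main obstacle, should this reading need justification, is to either extract from the proof of Theorem \ref{MAIN:THEOREM1}(a) that only positivity of $R^{(k)}$ near infinity is actually used (which holds here thanks to $\alpha_k>1$), or to supply a direct comparison showing that $X_k$ dominates a one-dimensional supercritical CBI process driven to infinity by its positive linear drift.
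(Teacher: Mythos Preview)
Your argument is essentially identical to the paper's: compute $R^{(k)}(\xi)=-b_{kk}\xi+K\xi^{\alpha_k}$, verify the hypotheses of Remark \ref{CORR:01} (positivity of $R^{(k)}$ for large $\xi$, the upper bound $R^{(k)}(\xi)\le(|b_{kk}|+K)\xi^{\alpha_k}$, and the lower bound on $F^{(k)}$ from \eqref{EQ:13}), and then invoke Theorem \ref{MAIN:THEOREM} for (a) and Theorem \ref{MAIN:THEOREM1}(a) for (b). The paper handles the unbounded-variation check by the blanket remark that $\alpha_1,\dots,\alpha_d\in(1,2)$ forces unbounded variation, whereas you verify $\int_0^1 z_k^{-\alpha_k}\,dz_k=\infty$ directly; both are fine.

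You have in fact spotted something the paper glosses over. In proving (b) the paper simply writes ``Assertion (b) follows immediately from Theorem \ref{MAIN:THEOREM1}(a)'', without commenting on the blanket hypothesis $R^{(k)}(\xi)>0$ for all $\xi>0$, which, as you correctly observe, fails near the origin when $b_{kk}>0$ since $(R^{(k)})'(0+)=-b_{kk}<0$. Your reading --- that this positivity hypothesis is operative only in alternative (b) of Theorem \ref{MAIN:THEOREM1}, the supercritical case (a) not requiring it --- is the intended one, and your proposed fallback (tracing the proof or comparing with a one-dimensional supercritical CBI) would close the gap if a referee pressed the point.
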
 
\begin{proof} 
 Assertion (b) follows immediately from Theorem
\ref{MAIN:THEOREM1} (a). Let us prove assertion (a).
 Since $\alpha_{1},\dots,\alpha_{d}\in(1,2)$, it follows that $X$
has unbounded variation. Hence it suffices to show that
Theorem \ref{MAIN:THEOREM} is applicable. First observe that
\begin{align*}
F^{(k)}(\xi) & =\beta_{k}\xi+\int\limits _{\R_{+}^{d}}\left(1-e^{-\xi z_{k}}\right)\nu(dz),\\
R^{(k)}(\xi) & =-b_{kk}\xi+\int\limits _{0}^{\infty}\left(e^{-\xi z}-1+\xi z\right)\frac{dz}{z^{1+\alpha_{k}}}=-b_{kk}\xi+K\xi^{\alpha_{k}},
\end{align*}
where $K=\int_{0}^{\infty}\left(e^{-w}-1+w\right)\frac{dw}{w^{1+\alpha_{k}}}>0$.
Next it is easily seen that
\[
R^{(k)}(\xi)>0,\ \ \text{ whenever }\xi>\left(\frac{\max\{0,b_{kk}\}}{K}\right)^{\frac{1}{\alpha_{k}-1}}.
\]
Moreover, one finds $R^{(k)}(\xi)\leq\left(|b_{kk}|+K\right)\xi^{\alpha_{k}}$
for $\xi\geq1$, and hence the assertion follows from Remark \ref{CORR:01}
since $\alpha_{k}\in(1,1+\gamma_{k})$. 
\end{proof} 
In Remark \ref{CORR:01}, if $\beta_{k}>0$, then we may take $\gamma_{k}=1$ 
so that \eqref{EQ:13} is satisfied. 
However, if $\beta_{k}=0$, then \eqref{EQ:13} may be still satisfied
as it is shown in the following example. 
\begin{Example} Let $\gamma\in(0,1)$
and set $\nu(dz)=\1_{\R_{+}^{d}}(z)\frac{dz}{|z|^{d+\gamma}}$. Then
$\int_{\R_{+}^{d}}(1\wedge|z|)\nu(dz)<\infty$ and
\[
\int\limits _{\R_{+}^{d}}\left(1-e^{-\xi z_{k}}\right)\frac{dz}{|z|^{d+\gamma}}=\xi^{\gamma}\int\limits _{\R_{+}^{d}}\left(1-e^{-w_{k}}\right)\frac{dw}{|w|^{d+\gamma}}.
\]
So \eqref{EQ:13} holds for $\gamma_{k}=\gamma$. 
Hence the assumptions of Theorem \ref{THEOREM:02} (a)
are satisfied, if $\alpha_{k}\in(1,1+\gamma)$. 
\end{Example} It
is worthwhile to mention that there exists a large class of measures
which satisfy \eqref{EQ:13} but are not of the form $\nu(dz)=\1_{\R_{+}^{d}}(z)\frac{dz}{|z|^{d+\gamma}}$,
see, e.g., \cite{KS17}, \cite{FJR18a} and \cite{FJR18b}. 

\section{Proofs of main results}

\subsection{Construction of auxilliary CBI process}

Let $(c,\beta,B,\nu,\mu)$ be admissible parameters and set
\begin{equation}
\widetilde{b}_{kj} = b_{kj}-\int\limits _{|z|>1}z_{k}\mu_{j}(dz)-\1_{\{k\neq j\}}\int\limits _{|z| \leq 1}z_{k}\mu_{j}(dz).\label{EQ:15}
\end{equation}
Let $(W,N_{\nu},N_{1},\dots,N_{d})$ be given as in (A1) -- (A3)
and consider a process $Y=(Y_{1},\ldots,Y_{d})$
satisfying, for each $k=1,\ldots,d$, the stochastic equation
\begin{align}
Y_{k}(t) & =y_{k}+\int\limits _{0}^{t}\left(\beta_{k}+\widetilde{b}_{kk}Y_{k}(s)\right)ds+\sqrt{2c_{k}}\int\limits _{0}^{t}\sqrt{Y_{k}(s)}dW_{k}(s)+\int\limits _{0}^{t}\int\limits _{\R_{+}^{d}}z_{k}N_{\nu}(ds,dz)\label{EQ:12}\\
 & \ \ \ +\int\limits _{0}^{t}\int\limits _{|z|\leq1}\int\limits _{\R_{+}}z_{k}\1_{\{r\leq Y_{k}(s-)\}}\widetilde{N}_{k}(ds,dz,dr)+\int\limits _{0}^{t}\int\limits _{|z|>1}\int\limits _{\R_{+}}z_{k}\1_{\{r\leq Y_{k}(s-)\}}N_{k}(ds,dz,dr),\nonumber
\end{align}
where $y=(y_{1},\ldots,y_{d})\in\R_{+}^{d}$. Finally, define projection
mappings $\mathrm{pr}_{j}:\R_{+}^{d}\longrightarrow\R_{+}$, $\mathrm{pr}_{j}(z)=z_{j}$,
$j\in\{1,\dots,d\}$. The next lemma states that the
system of equations \eqref{EQ:12} has a unique strong solution which
describes a CBI process. 
\begin{Proposition}\label{LEMMA:00} Let
$(c,\beta,B,\nu,\mu)$ be admissible parameters and let $(W,N_{\nu},N_{1},\dots,N_{d})$
be given as in (A1) -- (A3). Then the following hold:
\begin{enumerate}
\item[(a)] For each $y\in\R_{+}^{d}$, there exists a unique $\R_{+}^{d}$-valued
strong solution $Y$ to \eqref{EQ:12}.
\item[(b)] For each $j\in\{1,\dots,d\}$, $Y_{j}$ is a one-dimensional
CBI process with admissible parameters $(c_{j},\beta_{j},b_{jj},\widetilde{\nu}_{j},\widetilde{\mu}_{j})$,
where $\widetilde{\nu}_{j}=\nu\circ\mathrm{pr}_{j}^{-1}$, $\widetilde{\mu}_{j}=\mu_{j}\circ\mathrm{pr}_{j}^{-1}.$
\end{enumerate}
\end{Proposition}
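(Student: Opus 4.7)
The first observation I would make is that the system \eqref{EQ:12} is decoupled across components: the right-hand side of the $k$-th equation involves only $Y_k$ itself together with the independent drivers $W_k$, $N_\nu$, $N_k$. Consequently existence and pathwise uniqueness of $(Y_1,\ldots,Y_d)$ reduces to the same question for each $Y_k$ separately, and to establish (b) it will suffice to recognise the $k$-th equation, after a change of Poisson random measures, as the standard one-dimensional CBI SDE \eqref{SDE:CBI} with admissible parameters $(c_k,\beta_k,b_{kk},\widetilde{\nu}_k,\widetilde{\mu}_k)$.

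Fix $k \in \{1,\ldots,d\}$. The plan is to replace $N_\nu$ and $N_k$ by their images $N_\nu^{(k)}$ and $M_k$ under the projection $z \mapsto z_k$ (leaving the $s$, resp.\ $r$, variables untouched). By standard properties of Poisson random measures, these are again Poisson random measures with compensators $ds\,\widetilde{\nu}_k(dw)$ and $ds\,\widetilde{\mu}_k(dw)\,dr$, respectively, and all jump integrals appearing in the $k$-th line of \eqref{EQ:12} can be rewritten in terms of them. Admissibility of $(c_k,\beta_k,b_{kk},\widetilde{\nu}_k,\widetilde{\mu}_k)$ is then easy to verify: since $z_k \leq |z|$ one has $\int_{\R_+}(1\wedge w)\widetilde{\nu}_k(dw) \leq \int_{\R_+^d}(1\wedge|z|)\nu(dz) < \infty$, and a similar pointwise estimate, splitting according to $\{z_k \leq 1\}$ and $\{z_k > 1\}$, gives $\int_{\R_+}(w \wedge w^2)\widetilde{\mu}_k(dw) < \infty$; the off-diagonal condition of Definition \ref{ADMISSIBLE} is vacuous in dimension one, and any atoms of $\widetilde{\nu}_k$ or $\widetilde{\mu}_k$ at the origin can be ignored as they contribute nothing to the jump integrals.

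The crux of the argument is a bookkeeping step needed to match the cutoff $|z| \leq 1$ appearing in \eqref{EQ:12} with the one-dimensional cutoff $w \leq 1$ required by \eqref{SDE:CBI}. Since $|z| \leq 1$ implies $z_k \leq 1$, the only mismatch comes from jumps with $|z| > 1$ and $z_k \leq 1$; on that set $z_k$ is bounded by $1$ and $\mu_k(\{|z|>1\}) < \infty$, so the corresponding uncompensated integral may be rewritten as a compensated integral at the cost of the additional linear drift $Y_k(s)\int_{\{|z|>1,\,z_k \leq 1\}} z_k\,\mu_k(dz)$. Combining this with \eqref{EQ:15} for $j=k$ and using $\{z_k > 1\} \subseteq \{|z| > 1\}$, the total linear drift in the rewritten equation becomes
\begin{equation*}
\widetilde{b}_{kk} + \int\limits_{\{|z|>1,\,z_k\leq 1\}} z_k\,\mu_k(dz) \;=\; b_{kk} - \int\limits_{\{z_k > 1\}} z_k\,\mu_k(dz) \;=\; b_{kk} - \int\limits_{w>1} w\,\widetilde{\mu}_k(dw),
\end{equation*}
which is exactly the drift coefficient of $Y_k$ in \eqref{SDE:CBI} applied in dimension one with parameter $b_{kk}$. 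The Brownian and immigration parts need no further adjustment, since $W_k$ is already one-dimensional and the uncompensated integral $\int z_k\,N_\nu(ds,dz)$ is transformed into $\int w\,N_\nu^{(k)}(ds,dw)$ without a threshold.

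Once the $k$-th equation has been put in standard one-dimensional form, both (a) and (b) follow from \cite[Theorem 4.6]{BLP15} applied in dimension one. The only delicate point in this plan is the threshold adjustment just described; everything else amounts to routine identification of image measures under projection together with the admissibility check.
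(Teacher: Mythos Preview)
Your proposal is correct and follows essentially the same route as the paper: push the Poisson random measures $N_\nu$ and $N_k$ forward under the projection $\mathrm{pr}_k$, identify the resulting one-dimensional SDE with \eqref{SDE:CBI} for $d=1$, and invoke \cite[Theorem~4.6]{BLP15}. In fact your treatment of the cutoff mismatch between $\{|z|\le 1\}$ and $\{z_k\le 1\}$ is more careful than the paper's: the paper simply writes down the projected equation with threshold $(0,1]$ and drift $\widetilde b_{kk}$, whereas your computation shows that the correct drift after the threshold adjustment is $b_{kk}-\int_{w>1}w\,\widetilde\mu_k(dw)$, which is exactly what is needed to match \eqref{SDE:CBI} with parameter $b_{kk}$.
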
 \begin{proof} 
Define random measures $M_{1}(ds,dz,dr),\dots,M_{d}(ds,dz,dr)$
on $\R_{+}^{3}$ by
\[
M_{k}((a,b]\times A\times B)=N_{k}((a,b]\times\mathrm{pr}_{k}^{-1}(A)\times B),\qquad k\in\{1,\dots,d\},
\]
and $N_{\widetilde{\nu}_{1}}(ds,dz),\dots,N_{\widetilde{\nu}_{d}}(ds,dz)$
on $\R_{+}^{2}$ by
\[
N_{\widetilde{\nu}_{k}}((a,b] \times A)=N_{\nu}((a,b]\times\mathrm{pr}_{k}^{-1}(A)),\qquad k\in\{1,\dots,d\},
\]
where $a<b$, $A,B\in\mathcal{B}(\R_{+})$. Then $M_{1},\dots,M_{d}$
and $N_{\widetilde{\nu}_{1}},\dots,N_{\widetilde{\nu}_{d}}$ are Poisson
random measures with compensators
\[
\widehat{M}_{k}(ds,dz,dr)=ds\widetilde{\mu}_{k}(dz)dr\ \text{ and }\ \widehat{N}_{\widetilde{\nu}_{k}}(ds,dz)=ds\widetilde{\nu}_{k}(dz),\quad k\in\{1,\dots,d\}.
\]
Moreover, $M_{k},N_{\widetilde{\nu}_k},W_k$ are mutually independent.
Let $\widetilde{M}_{k}(ds,dz,dr)=M_{k}(ds,dz,dr)-\widehat{M}_{k}(ds,dz,dr)$
be the corresponding compensated Poisson random measures. Then \eqref{EQ:12}
takes the form
\begin{align*}
Y_{k}(t) & =y_{k}+\int\limits _{0}^{t}\left(\beta_{k}+\widetilde{b}_{kk}Y_{k}(s)\right)ds+\sqrt{2c_{k}}\int\limits _{0}^{t}\sqrt{Y_{k}(s)}dW_{k}(s)+\int\limits _{0}^{t}\int\limits _{\R_{+}}zN_{\widetilde{\nu}_{k}}(ds,dz)\\
 & \ \ \ +\int\limits _{0}^{t}\int\limits _{(0,1]}\int\limits _{\R_{+}}z\1_{\{r\leq Y_{k}(s-)\}}\widetilde{M}_{k}(ds,dz,dr)+\int\limits _{0}^{t}\int\limits _{(1,\infty)}\int\limits _{\R_{+}}z\1_{\{r\leq Y_{k}(s-)\}}M_{k}(ds,dz,dr).
\end{align*}
This equation is now a particular case of \eqref{SDE:CBI} for dimension
$d=1$, i.e., it has a unique $\R_{+}$-valued solution which is a
CBI process with admissible parameters $(c_{k},\beta_{k},\widetilde{b}_{kk},\widetilde{\nu}_{k},\widetilde{\mu}_{k})$,
see also \cite{MR2584896} for related results. 
\end{proof}
We close this section with the observation that $Y$ obtained from \eqref{EQ:12}
is actually a CBI process on $\R_+^d$.
\begin{Remark}
 Let $(c,\beta,B,\nu,\mu)$ be admissible parameters, 
 let $(W,N_{\nu},N_{1},\dots,N_{d})$ be given as in (A1) -- (A3),
 and let $Y$ be the unique solution to \eqref{EQ:12}.
 Then $Y$ is a multi-type CBI process
with admissible parameters $(c,\beta,B^{Y},\nu,\mu^{Y})$, where $B^{Y}=\mathrm{diag}(b_{11},\dots,b_{dd})$
and $\mu^{Y}=(\mu_{1}^{Y},\dots,\mu_{d}^{Y})$ with $\mu_{j}^{Y}(dz)=\widetilde{\mu}_{j}(dz_{k})\otimes\prod_{k\neq j}\delta_{0}(dz_{k})$,
$j=1,\ldots,d$.
\end{Remark}
 Since we do not use this result later on, we only sketch the main idea of proof. In view of \cite{BLP15} it suffices to show that the Markov generator of $Y$ takes the desired form. However, this can be shown by direct computation
 using It\^{o}'s formula.

\subsection{Comparison with auxiliary CBI process}

The next statement is the key estimate for this work. 
\begin{Proposition}\label{COMPARISON}
Let $(c,\beta,B,\nu,\mu)$ be admissible parameters. Consider $(W,N_{\nu},N_{1},\dots,N_{d})$
as in (A1) -- (A3), and let $X$ be the multi-type CBI process obtained
from \eqref{SDE:CBI}. Let
$Y$ be the unique strong solution to \eqref{EQ:12} with $y=x$.
Then
\[
\P[X_{k}(t)\geq Y_{k}(t),\ \ t\geq0]=1,\qquad k\in\{1,\dots,d\}.
\]
\end{Proposition}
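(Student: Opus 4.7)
The plan is to couple $X$ and $Y$ via the common driving noise of (A1)--(A3) and establish $X_k(t) \geq Y_k(t)$ pathwise by a Yamada--Watanabe type argument applied to $D_k(t) := X_k(t) - Y_k(t)$. I would first subtract \eqref{EQ:12} from the $k$-th component of \eqref{SDE:CBI} and use \eqref{EQ:15} to identify coefficients. The integrals against $N_\nu$ cancel, so $D_k(0) = 0$ and $D_k$ admits a semimartingale decomposition with continuous drift
\[
\widetilde{b}_{kk} D_k(s) + \sum_{j \neq k} \Bigl(b_{kj} - \int_{|z|>1} z_k \mu_j(dz)\Bigr) X_j(s),
\]
the second sum being non-negative because admissibility condition (iii) yields $b_{kj} \geq \int_{\R_+^d} z_k \mu_j(dz) \geq \int_{|z|>1} z_k \mu_j(dz)$ for $j \neq k$; with diffusion coefficient $\sqrt{2c_k}(\sqrt{X_k(s)} - \sqrt{Y_k(s)})$ against $W_k$; with shared $N_k$-jumps of height $z_k[\1_{\{r \leq X_k(s-)\}} - \1_{\{r \leq Y_k(s-)\}}]$ sharing the sign of $D_k(s-)$; and with extra $N_j$-jumps for $j \neq k$ of non-negative height $z_k \1_{\{r \leq X_j(s-)\}}$.

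I would then localize via $\tau_N := \inf\{t \geq 0 : |X(t)| + |Y(t)| \geq N\}$, which tends to infinity since both $X$ and $Y$ are conservative, and introduce the standard Yamada--Watanabe smoothing $\phi_n \in C^2(\R)$ with $\phi_n \equiv 0$ on $(-\infty, 0]$, $\phi_n(x) \uparrow x^+$, $0 \leq \phi_n' \leq 1$, and $\phi_n''$ supported in $[a_n, a_{n-1}]$ with $x\phi_n''(x) \leq 2/n$. Applying It\^o's formula to $\phi_n(-D_k(t \wedge \tau_N))$ and taking expectation kills the $dW_k$ and compensated Poisson martingales. The diffusion correction is bounded using $(\sqrt{X_k}-\sqrt{Y_k})^2 \leq |D_k|$ and Yamada's inequality by $c_k \phi_n''(-D_k)|D_k| \leq 2c_k/n$. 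The drift contribution $\widetilde b_{kk}\phi_n'(-D_k)(-D_k)$ is bounded, via the convexity identity $\phi_n'(-D_k)(-D_k) \leq \phi_n(-D_k) + a_{n-1}$, by $|\widetilde b_{kk}|\phi_n(-D_k) + o_n(1)$; the non-negative $j \neq k$ drift multiplied by $-\phi_n'(-D_k) \leq 0$ contributes non-positively.

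The delicate piece is the jump compensator. Jumps from $N_j$ with $j \neq k$ produce $\Delta D_k \geq 0$, so after splitting $N_j = \widetilde N_j + \widehat N_j$ (valid in $L^1$ since $\int z_k \mu_j(dz) < \infty$ for $k \neq j$ by \eqref{EQ:00}) the uncompensated jump sum $\phi_n(-D_k(s)) - \phi_n(-D_k(s^-)) \leq 0$, so the drift contribution is non-positive. For the $N_k$-jumps the compensator integrand vanishes on $\{D_k(s-) \geq 0\}$; on $\{D_k(s-) < 0\}$ the identity $\1_{\{r \leq X_k\}} - \1_{\{r \leq Y_k\}} = -\1_{\{X_k < r \leq Y_k\}}$ allows integration in $r$ against $\widehat N_k$ to produce a prefactor $|D_k(s-)|$, and splitting the $z_k$-integral at $1$ lets one apply the Yamada estimate ($x\phi_n''(x) \leq 2/n$ combined with $\int_{z_k \leq 1} z_k^2 \mu_k(dz) < \infty$) for small jumps and the $1$-Lipschitz bound with $\int_{z_k > 1} z_k \mu_k(dz) < \infty$ for large jumps. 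Collecting all the bounds yields
\[
\E\bigl[\phi_n(-D_k(t \wedge \tau_N))\bigr] \leq C \int_0^t \E\bigl[\phi_n(-D_k(s \wedge \tau_N))\bigr] \, ds + C \int_0^t \E\bigl[D_k(s \wedge \tau_N)^-\bigr] \, ds + \epsilon_n
\]
with $\epsilon_n \to 0$. Passing $n \to \infty$ (so $\phi_n(-D_k) \uparrow D_k^-$), Gronwall's inequality, and then $N \to \infty$ give $\E[D_k(t)^-] = 0$, and path right-continuity upgrades this to $\P[D_k(t) \geq 0, \ t \geq 0] = 1$. The main obstacle is precisely this $N_k$-jump estimate when $\mu_k$ is singular near the origin (as in the stable-root process of Theorem \ref{THEOREM:02}), since the jump heights depend on $X_k$ and $Y_k$ separately through indicators rather than on $D_k$ alone; the key trick is to integrate $r$ out first so that the $|D_k|$ prefactor emerges and the Yamada estimate produces the necessary $1/n$ decay.
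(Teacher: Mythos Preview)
Your proposal is correct and follows essentially the same route as the paper's proof: a Yamada--Watanabe smoothing applied to $Y_k - X_k$ (your $\phi_n(-D_k)$), localization, It\^o's formula, termwise estimates yielding a Gronwall inequality, and in particular the same key trick of integrating out the auxiliary variable $r$ in the $N_k$-compensator so that the factor $|D_k|$ appears and the bound $x\phi_n''(x)\le 2/n$ controls the small-jump contribution. The only cosmetic difference is that the paper decompensates the $N_j$-jumps for $j\neq k$ at the outset, absorbing the small-jump compensator into the drift coefficient $\widetilde b_{kj}$, whereas you keep the drift coefficient as $b_{kj}-\int_{|z|>1}z_k\,\mu_j(dz)$ and handle the $N_j$-jump compensator separately; both arrangements give a non-positive contribution for the same reason.
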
 \begin{proof} Our proof is based on the method
developed in \cite[Lemma 4.1]{BLP15}. 
Define $\Delta_{k}(t):=Y_{k}(t)-X_{k}(t)$ and
$\delta_k(r,s-) = \1_{ \{r \leq Y_k(s-)\} } - \1_{ \{ r \leq X_k(s-)\} }$.
Then $\Delta_{k}(0)=0$ and we obtain, for each $k \in \{1,\dots, d\}$, 
\begin{align*}
\Delta_{k}(t) & =\int\limits _{0}^{t}\left(\widetilde{b}_{kk}\Delta_{k}(s)-\sum\limits _{j\neq k}\widetilde{b}_{kj}X_{j}(s)\right)ds+\sqrt{2c_{k}}\int\limits _{0}^{t}\left(\sqrt{Y_{k}(s)}-\sqrt{X_{k}(s)}\right)dW_{k}(s)\\
 & \ \ \ +\int\limits _{0}^{t}\int\limits _{|z|\leq1}\int\limits _{\R_{+}}z_{k}\delta_{k}(r,s-)\widetilde{N}_{k}(ds,dz,dr)+\int\limits _{0}^{t}\int\limits _{|z|>1}\int\limits _{\R_{+}}z_{k}\delta_{k}(r,s-)N_{k}(ds,dz,dr)\\
 & \ \ \ -\sum\limits _{j\neq k}\int\limits _{0}^{t}\int\limits _{\R_{+}^{d}}\int\limits _{\R_{+}}z_{k}\1_{\{r\leq X_{j}(s-)\}}N_{j}(ds,dz,dr).
\end{align*}
Let $\phi_{m}:\R\longrightarrow\R_{+}$ be a sequence of twice continuously
differentiable functions with the properties:
\begin{enumerate}
\item[(i)] $\phi_{m}(z)\nearrow z_{+}:=\max\{0,z\}$, as $m\to\infty$ for all
$z\in\R$.
\item[(ii)] $\phi_{m}'(z)\in[0,1]$ for all $m\in\N$ and $z\geq0$.
\item[(iii)] $\phi_{m}'(z)=\phi_{m}(z)=0$ for all $m\in\N$ and $z\leq0$.
\item[(vi)] $\phi_{m}''(x-y)(\sqrt{x}-\sqrt{y})^{2}\leq2/m$ for all $m\in\N$
and $x,y\geq0$.
\end{enumerate}
The existence of such a sequence was shown in the proof of \cite[Theorem 3.1]{M13}.
Applying the It\^{o} formula to $\phi_{m}(\Delta_{k}(t))$ gives
\begin{equation}
\phi_{m}(\Delta_{k}(t))=\sum\limits _{n=1}^{5}\int\limits _{0}^{t}\mathcal{R}_{k,m}^{n}(s)ds+\mathcal{M}_{k,m}(t),\label{EQ:03}
\end{equation}
where $\mathcal{R}_{k,m}^{1},\dots,\mathcal{R}_{k,m}^{5}$ are given by
\begin{align*}
\mathcal{R}_{k,m}^{1}(s) &= \phi_{m}'(\Delta_{k}(s))\left(\widetilde{b}_{kk}\Delta_{k}(s)-\sum\limits _{j\neq k}\widetilde{b}_{kj}X_{j}(s)\right)
\\ \mathcal{R}_{k,m}^{2}(s) &= c_{k}\phi_{m}''(\Delta_{k}(s))\left(\sqrt{Y_{k}}(s)-\sqrt{X_{k}(s)}\right)^{2}
\\ \mathcal{R}_{k,m}^{3}(s) &=\int\limits _{|z|\leq1}\int\limits _{\R_{+}}\left(\phi_{m}(\Delta_{k}(s)+z_{k}\delta_{k}(r,s))-\phi_{m}(\Delta_{k}(s))-z_{k}\delta_{k}(r,s)\phi_{m}'(\Delta_{k}(s))\right)dr\mu_{k}(dz)
\\ \mathcal{R}_{k,m}^{4}(s) &=\int\limits _{|z|>1}\int\limits _{\R_{+}}\left(\phi_{m}(\Delta_{k}(s)+z_{k}\delta_{k}(r,s))-\phi_{m}(\Delta_{k}(s))\right)dr\mu_{k}(dz)
\\ \mathcal{R}_{k,m}^{5}(s) &=\sum\limits _{j\neq k}\int\limits _{\R_{+}^{d}}\int\limits _{\R_{+}}\left(\phi_{m}(\Delta_{k}(s)-z_{k}\1_{\{r\leq X_{j}(s)\}})-\phi_{m}(\Delta_{k}(s))\right)dr\mu_{j}(dz),
\end{align*}
$(\mathcal{M}_{k,m}(t))_{t \geq 0}$ is a local martingale
and $\delta_{k}(r,s)=\1_{\{r\leq Y_{k}(s)\}}-\1_{\{r\leq X_{k}(s)\}}$.
For $l\in\N$, define the stopping time
\[
\tau_{l}=\inf\{t>0\ |\ \max\limits _{i\in\{1,\dots,d\}}\max\{X_{i}(t),Y_{i}(t)\}>l\}.
\]
Using the precise form of $\mathcal{M}_{k,m}$ given by It\^{o}'s formula combined with similar estimates to \cite[Lemma 4.1]{BLP15},
one can show that $(\mathcal{M}_{k,m}(t\wedge\tau_{l}))_{t\geq0}$
is a martingale for any $l\in\N$. Next we will prove that there exists
a constant $C>0$ such that
\begin{equation}
\sum_{n=1}^{5}\mathcal{R}_{k,m}^{n}(s) \leq C\Delta_{k}(s)_{+}+\frac{C}{m}.\label{EQ:01}
\end{equation}
Taking then expectations in \eqref{EQ:03}, using that $(\mathcal{M}_{k,m}(t\wedge\tau_{l}))_{t\geq0}$
is a martingale and estimating as in \eqref{EQ:01}, gives
\begin{align*}
\E[\phi_{m}(\Delta_{k}(t\wedge\tau_{l}))] & =\sum\limits _{n=1}^{5}\E\left[\int\limits _{0}^{t\wedge\tau_{l}}\mathcal{R}_{k,m}^{n}(s)ds\right]\leq C\E\left[\int\limits _{0}^{t\wedge\tau_{l}}\Delta_{k}(s)_{+}ds\right]+\frac{C}{m}\E[t\wedge\tau_{l}]\\
 & \leq C\int\limits _{0}^{t}\E[\Delta_{k}(s\wedge\tau_{l})_{+}]ds+\frac{Ct}{m}.
\end{align*}
Letting $m\to\infty$ and using property (i) gives
\[
\E[\Delta_{k}(t\wedge\tau_{l})_{+}]\leq C\int\limits _{0}^{t}\E[\Delta_{k}(s\wedge\tau_{l})_{+}]ds.
\]
Applying Gronwall lemma shows that, for any $k\in\{1,\dots,d\}$ and
$l\in\N$, one has $\E[\Delta_{k}(t\wedge\tau_{l})_{+}]=0$. Letting
now $l\to\infty$ proves the assertion.

Hence it remains to prove \eqref{EQ:01}. In order to estimate $\mathcal{R}_{k,m}^{1}$
we use properties (ii), (iii), $\widetilde{b}_{kj}\geq0$ for $k\neq j$
and $X_{j}(s) \geq 0$ to obtain
\[
\mathcal{R}_{k,m}^{1}(s)=\phi_{m}'(\Delta_{k}(s))\widetilde{b}_{kk}\Delta_{k}(s)_{+}-\phi_{m}'(\Delta_{k}(s))\sum\limits _{j\neq k}\widetilde{b}_{kj}X_{j}(s)\leq|\widetilde{b}_{kk}|\Delta_{k}(s)_{+}.
\]
For $\mathcal{R}_{k,m}^{2}$ we obtain from (iv) the estimate $\mathcal{R}_{k,m}^{2}(s)\leq\frac{2c_{k}}{m}$.
Let us now turn to $\mathcal{R}_{k,m}^{3}$. Using property (iv) we
see that, for each $y>0$, $z\geq0$ and $m\in\N$, 
there exists $\vartheta=\vartheta(y,z)\in[0,1]$
such that
\[
\phi_{m}(y+z)-\phi_{m}(y)-\phi_{m}'(y)z=\phi_{m}''(y+\vartheta z)\frac{z^{2}}{2}\leq\frac{2z^{2}}{2m(y+\vartheta z)}\leq\frac{z^{2}}{my}.
\]
Next observe that $\delta_{k}(r,s)>0$ if and only if $\Delta_{k}(s)>0$
and $r\in(X_{k}(s),Y_{k}(s)]$. 
Applying both observations, we obtain
\begin{align*}
\mathcal{R}_{k,m}^{3}(s) & \leq\1_{\{\Delta_{k}(s)>0\}}\int\limits _{|z|\leq1}\int\limits _{\R_{+}}\left(\phi_{m}(\Delta_{k}(s)+z_{k}\delta_{k}(r,s))-\phi_{m}(\Delta_{k}(s))-z_{k}\delta_{k}(r,s)\phi_{m}'(\Delta_{k}(s))\right)dr\mu_{k}(dz)\\
 & \leq\frac{\1_{\{\Delta_{k}(s)>0\}}}{m\Delta_{k}(s)}\int\limits _{|z|\leq1}\int\limits _{\R_{+}}z_{k}^{2}\delta_{k}(r,s)^{2}dr\mu_{k}(dz)
 \leq\frac{1}{m}\int\limits _{|z|\leq1}z_{k}^{2}\mu_{k}(dz),
\end{align*}
where we have used $\int_{\R_{+}}\delta_{k}(r,s)^{2}dr=\Delta_{k}(s)$
a.s. on $\{\Delta_{k}(s)>0\}$. For $\mathcal{R}_{k,m}^{4}$ we use
property (ii), so that
\begin{align*}
\mathcal{R}_{k,m}^{4}(s) & \leq\1_{\{\Delta_{k}(s)>0\}}\int\limits _{|z|>1}\int\limits _{\R_{+}}\left(\phi_{m}(\Delta_{k}(s)+z_{k}\delta_{k}(r,s))-\phi_{m}(\Delta_{k}(s))\right)\mu_{k}(dz)dr\\
 & \leq\1_{\{\Delta_{k}(s)>0\}}\int\limits _{|z|>1}\int\limits _{\R_{+}}z_{k}\delta_{k}(r,s)\mu_{k}(dz)dr
 \leq\Delta_{k}(s)_{+}\int\limits _{|z|>1}z_{k}\mu_{k}(dz),
\end{align*}
where we have also used $\int_{\R_{+}}\delta_{k}(r,s)dr=\Delta_{k}(s)$.
For the last term we use property (ii), so that $\mathcal{R}_{k,m}^{5}(s)\leq0$.
This proves \eqref{EQ:01} and hence the assertion. \end{proof}

\subsection{Proofs of Theorem \ref{MAIN:THEOREM} and Theorem \ref{MAIN:THEOREM1}}

We are now prepared to prove our main results of this work.
First observe that Proposition \ref{COMPARISON} implies that, for
any $k\in\{1,\dots,d\}$,
\[
\P[ Y_k(t) > 0, \quad t \geq 0] = 1\ 
\Rightarrow\ \P[ X_k(t) > 0, \quad t \geq 0]=1,
\]
and similarly
\[
\P[\lim\limits _{t\to\infty}Y_{k}(t)=\infty]=1\ \Rightarrow\ \P[\lim\limits _{t\to\infty}X_{k}(t)=\infty]=1,
\]
where $X$ and $Y$ are the unique solutions to \eqref{SDE:CBI} and
\eqref{EQ:12}, respectively. In view of Proposition \ref{LEMMA:00}, $Y_{k}$
satisfies the conditions of \cite[Corollary 6]{FU13} or 
\cite[Theorem 2]{DFM14}, respectively. 
Now it is easy to see that the assertions of Theorem \ref{MAIN:THEOREM} and Theorem \ref{MAIN:THEOREM1} are true.

\section*{Appendix: Additional proofs}

\begin{proof}[Proof of Proposition \ref{PROP:00}.]
 Observe that under condition
\eqref{EQ:07} the process $X_{k}$ also satisfies
\begin{align*}
X_{k}(t) & =x_{k}+\int\limits _{0}^{t}\left(\beta_{k}+\sum\limits _{j=1}^{d}g_{kj}X_{j}(s)\right)ds+\int\limits _{0}^{t}\int\limits _{\R_{+}^{d}}z_{k}N_{\nu}(ds,dz)\\
 & \ \ \ +\int\limits _{0}^{t}\int\limits _{\R_{+}^{d}}\int\limits _{\R_{+}}z_{k}\1_{\{r\leq X_{k}(s-)\}}N_{k}(ds,dz,dr)+\sum\limits _{j\neq k}\int\limits _{0}^{t}\int\limits _{\R_{+}^{d}}\int\limits _{\R_{+}}z_{k}\1_{\{r\leq X_{j}(s-)\}}N_{j}(ds,dz,dr),
\end{align*}
where $g_{kj}$ is defined in \eqref{GDEF}.
This implies that $X_{k}$
has bounded variation. Let $y(t)$ be the unique solution to
$y(t)=x_{k}+\int_{0}^{t}\left(\beta_{k}+\theta_{k}y(s)\right)ds$, i.e.,
\[
y(t)=\begin{cases}
x_{k}e^{\theta_{k}t}+\beta_{k}\frac{e^{\theta_{k}t}-1}{\theta_{k}}, & \text{ if }\theta_{k}\neq0\\
x_k + \beta_{k}t, & \text{ if }\theta_{k}=0
\end{cases},\qquad t\geq0.
\]
Proceeding exactly as in the proof of Proposition \ref{COMPARISON},
we obtain $\P[X_{k}(t)\geq y(t)]=1$ for all $t\geq0$. This proves
the assertion. 
\end{proof}

\begin{proof}[Proof of Proposition \ref{PROP:01}.]
Observe that under \eqref{EQ:07}
the process $X$ also satisfies
\begin{equation*}
X(t) = x+\int\limits _{0}^{t}\left(\beta+GX(s)\right)ds+\int\limits _{0}^{t}\int\limits _{\R_{+}^{d}}zN_{\nu}(ds,dz)+\sum\limits _{j=1}^{d}\int\limits _{0}^{t}\int\limits _{\R_{+}^{d}}\int\limits _{\R_{+}}z\1_{\{r\leq X_{j}(s-)\}}N_{j}(ds,dz,dr).
\end{equation*}
Let $y(t)$ be the unique solution to
$y(t)=x+\int_{0}^{t}\left(\beta+Gy(s)\right)ds$ which is given by
$y(t)=e^{tG}x+\int_{0}^{t}e^{sG}\beta ds$.
Proceeding exactly as in the proof of Proposition \ref{COMPARISON},
we obtain $\P[X_{k}(t)\geq y_{k}(t)]=1$ for all $t\geq0$ and $k\in\{1,\dots,d\}$.
This proves the assertion.
\end{proof}

\begin{proof}[Proof of Remark \ref{CORR:01}]
 Set $\kappa = \max\{M_0, M_1, M_2\}$.
 If $\alpha_{k}<1+\gamma_{k}$, then $\frac{F^{(k)}(u)}{R^{(k)}(u)} \geq \frac{C_1}{C_2}u^{\gamma_k - \alpha_k}$, for $u\in[\kappa,\xi]$, and hence
\begin{align*}
\exp\left(\int\limits _{\kappa}^{\xi}\frac{F^{(k)}(u)}{R^{(k)}(u)}du\right) 
 &\geq\exp\left(\frac{C_{1}}{C_{2}}\int\limits _{\kappa}^{\xi}u^{\gamma_{k}-\alpha_{k}}du\right)
 \\ &=\exp\left(-\frac{C_{1}}{C_{2}}\frac{\kappa^{1+\gamma_{k}-\alpha_{k}}}{1+\gamma_{k}-\alpha_{k}}\right)\exp\left(\frac{C_{1}}{C_{2}}\frac{\xi^{1+\gamma_{k}-\alpha_{k}}}{1+\gamma_{k}-\alpha_{k}}\right)
\end{align*}
and 
\begin{align*}
\int\limits _{\kappa}^{\infty}\exp\left(\int\limits _{\kappa}^{\xi}\frac{F^{(k)}(u)}{R^{(k)}(u)}du\right)\frac{d\xi}{R^{(k)}(\xi)} 
&\geq\frac{\exp\left(-\frac{C_{1}}{C_{2}}\frac{\kappa^{1+\gamma_{k}-\alpha_{k}}}{1+\gamma_{k}-\alpha_{k}}\right)}{C_{2}}\int\limits _{\kappa}^{\infty}\exp\left(\frac{C_{1}}{C_{2}}\frac{\xi^{1+\gamma_{k}-\alpha_{k}}}{1+\gamma_{k}-\alpha_{k}}\right)\frac{d\xi}{\xi^{\alpha_{k}}}=\infty.
\end{align*}
This proves \eqref{EQ:04} under (a).
If $\alpha_{k}=1+\gamma_{k}$, then we obtain for $\xi\geq \kappa$
and $u\in[\kappa,\xi]$,
\begin{align*}
\exp\left(\int\limits _{\kappa}^{\xi}\frac{F^{(k)}(u)}{R^{(k)}(u)}du\right) 
 \geq \exp\left(\frac{C_{1}}{C_{2}}\int\limits _{\kappa}^{\xi}u^{\gamma_{k}-\alpha_{k}}du\right)
 = \kappa^{-\frac{C_{1}}{C_{2}}}\xi^{\frac{C_{1}}{C_{2}}}.
\end{align*}
Using $\alpha_{k}\leq 1+\frac{C_{1}}{C_{2}}$ gives
\begin{equation*}
\int\limits _{\kappa}^{\infty}\exp\left(\int\limits _{\kappa}^{\xi}\frac{F^{(k)}(u)}{R^{(k)}(u)}du\right)\frac{d\xi}{R^{(k)}(\xi)} 
 \geq\frac{\kappa^{-\frac{C_{1}}{C_{2}}}}{C_{2}}\int\limits_{\kappa}^{\xi}\frac{\xi^{\frac{C_{1}}{C_{2}}}}{\xi^{\alpha_{k}}}d\xi=\infty,
\end{equation*}
and hence proves \eqref{EQ:04} under (b). 
\end{proof}

\begin{footnotesize}

\bibliographystyle{alpha}
\bibliography{Bibliography}

\end{footnotesize}

\end{document}